\theoremstyle{plain}
\newtheorem{theorem}{Theorem}[section]
\newtheorem{corollary}[theorem]{Corollary}
\theoremstyle{definition}
\theoremstyle{remark}
\newcommand{\nri}{n\rightarrow\infty}
\newcommand{\bnri}{N\rightarrow\infty}
\newcommand{\bbR}{\mathbb{R}}
\newcommand{\bbN}{\mathbb{N}}
\newcommand{\bbS}{\mathbb{S}}
\newcommand{\mca}{\mathcal{A}}
\newcommand{\mcn}{\mathcal{N}}
\newcommand{\mcm}{\mathcal{M}}
\newcommand{\mcp}{\mathcal{P}}
\newcommand{\diam}{\textrm{diam}}
\newcommand{\mue}{\mu_{\textrm{eq}}}
\DeclareMathOperator*{\supp}{supp}
\title[]{Extremal Polarization Configurations for Integrable Kernels}
\date{}
\begin{document}
\maketitle

\begin{center}
\textrm{Brian Simanek  \footnote{\label{note1}The author gratefully acknowledges support from Doug Hardin and Ed Saff's National Science Foundation grant DMS-1109266}}
\end{center}

\begin{abstract}
Our main result shows that if a lower-semicontinuous kernel $K$ satisfies some mild additional hypotheses, then asympotitically polarization optimal configurations are precisely those that are asymptotically distributed according to the equilibrium measure for the corresponding minimum energy problem.
\end{abstract}






\normalsize

\section{Background and Results}\label{res}


Suppose $\mca$ is a compact set that is embedded in $\bbR^t$.  Let $K(x,y):\mca\times\mca\rightarrow[0,\infty]$ be a kernel given by $K(x,y)=f(|x-y|)$, where $f:[0,\infty)\rightarrow[0,\infty]$ is a lower semi-continuous function and $|\cdot|$ represents the Euclidean distance in $\bbR^t$.  We will let $\mcm(\mca)$ denote the set of positive probability measures with support in $\mca$.  For any $\mu\in\mcm(\mca)$, the kernel generates a potential $U^{\mu}$ by
\[
U^{\mu}(x)=\int_{\mca}K(x,y)d\mu(y),\qquad x\in\bbR^t,
\]
which is also non-negative and lower semi-continuous (by Fatou's Lemma; see \cite[Section 1.2]{MZ}).  For any configuration $\omega_N=(a_1,\ldots,a_N)$ of $N$ (possibly not distinct) points in $\mca$, we define its \textit{polarization} by
\[
P(\omega_N):=\min_{x\in\mca}\frac{1}{N}\sum_{y\in\omega_N}K(x,y).
\]
Equivalently, $P(\omega_N)$ is the minimum in $\mca$ of the potential generated by the probability measure $\nu_N$ that assigns weight $N^{-1}$ to each point in $\omega_N$ (counting multiplicities).  
If we associate such $N$-point configurations with the space $\mca^N$, then the extremal $N$-point  polarization problem is to find
\[
\mcp(\mca,N):=\sup_{\omega_N\in\mca^N}P(\omega_N).
\]
If $\mcm_N(\mca)$ denotes the set of all probability measures $\nu$ of the form
\[
\nu=\frac{1}{N}\sum_{j=1}^N\delta_{a_j},\qquad\qquad a_j\in\mca,\quad j=1,\ldots,N,
\]
then $\mcp(\mca,N)$ can be defined as
\[
\mcp(\mca,N)=\sup_{\nu\in\mcm_N(\mca)}\,\min_{x\in\mca}U^{\nu}(x).
\]

Polarization problems have a lengthy history, with many substantial results appearing in \cite{ABE,MEBook,ES,FR,HKS,Ohtsuka}.  One of the most fundamental results is \cite[Theorem 2]{Ohtsuka}, which asserts that
\begin{align}\label{ohtresult}
\lim_{\bnri}\mcp(\mca,N)=\sup_{\mu\in\mcm(\mca)}\min_{x\in\mca}U^{\mu}(x).
\end{align}
Any measure $\mu$ that achieves the supremum on the right-hand side of (\ref{ohtresult}) is called a \textit{polarization extremal measure}.  One consequence of our results will be a demonstration of the uniqueness of the polarization extremal measure for a large class of kernels $K$ and compact sets $\mca$ and a proof that these extremal measures are also extremal for the minimum energy problem.

The minimum energy problem for the kernel $K$ and the set $\mca$ is to find a configuration $\omega_N=(a_1,\ldots,a_N)\in\mca^N$ that minimizes then the energy functional
\[
E(\omega_N):=\sum_{{i,j=1}\atop{i\neq j}}^NK(a_i,a_j).
\]
It is well-known that if there is $\mu\in\mcm(\mca)$ so that
\[
I[\mu]:=\int_{\mca}\int_{\mca}K(x,y)d\mu(x)d\mu(y)<\infty,
\]
then there is a measure $\mue\in\mcm(\mca)$ so that
\[
\lim_{\bnri}\frac{\min_{\omega_N\in\mca^N}E(\omega_N)}{N^2}=I[\mue].
\]
The quantity $I[\mu]$ is known as the \textit{$K$-energy} of $\mu$ and the measure $\mue$ is known as a \textit{$K$-equilibrium measure}.  The set of $K$-equilibrium measures is given by
\[
\left\{\mu\in\mcm(\mca): I[\mu]=\inf_{\nu\in\mcm(\mca)}I[\nu]\right\},
\]
(see \cite{LGS}).

For our computations, we will make the following additional assumptions on the kernel $K$ and the set $\mca$:

\vspace{2mm}

(A1)  There is a $\mu\in\mcm(\mca)$ so that $I[\mu]<\infty$.

\vspace{2mm}

(A2)  The kernel $K$ has a unique equilibrium measure, which we denote by $\mue$.

\vspace{2mm}



(A3)  The support of $\mue$ is all of $\mca$.

\vspace{2mm}


(A4)  The potential function
\[
U^e(x):=\int_{\mca}K(x,y)d\mue(y)
\]
is equal to a positive constant on all of $\mca$, which we denote by $W_K$.



\vspace{4mm}

The conditions (A1-A4), while far from being generic, are satisfied by a very large collection of compact sets $\mca$ and kernels $K$ and we will explore some examples in Section \ref{exam}.  Note that condition (A3) is not heavily restrictive in the sense that if $\supp(\mue)\neq\mca$, then we can redefine $\mca$ to be the support of $\mue$ so that (A3) is then satisfied.  All four conditions are satisfied when $\mca=\bbS^d\subset\bbR^t$ and $K(x,y)=|x-y|^{-s}$ for any $s\in(0,d)$.  In this case, the $K$-equilibrium measure is normalized surface-area measure on $\bbS^d$ (see \cite{Notices}).

Now we are ready to state our main result.

\begin{theorem}\label{equi}
Let the compact set $\mca$ and lower semi-continuous kernel $K$ satisfy conditions (A1-A4).  For each $N\geq2$, choose some $\omega_N\in\mca^N$ and let $\nu_N$ be the probability measure that assigns mass $N^{-1}$ to each point in $\omega_N$ (counting multiplicities).  The following are equivalent:
\begin{enumerate}
\item  The measures $\{\nu_N\}_{N\geq2}$ converge weakly to $\mue$.
\item  It holds that
\[
\lim_{\bnri}P(\omega_N)=W_K.
\]
\item  It holds that
\[
\lim_{\bnri}\left(\frac{1}{N}\sum_{y\in\omega_N}K(x,y)-P(\omega_N)\right)=0,
\]
in $L^1(\mue)$.
\end{enumerate}
\end{theorem}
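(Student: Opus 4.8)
The cornerstone of the plan is the mutual-energy identity
\[
\int_{\mca} U^{\nu_N}(x)\,d\mue(x)=\int_{\mca}U^e(y)\,d\nu_N(y)=W_K,
\]
valid for every $N$, which follows from Tonelli's theorem (legitimate since $K\geq0$), the symmetry $K(x,y)=K(y,x)$, and condition (A4). Since $P(\omega_N)=\min_{x\in\mca}U^{\nu_N}(x)$, the function $g_N:=U^{\nu_N}-P(\omega_N)$ is nonnegative on $\mca=\supp(\mue)$, so this identity yields
\[
\|g_N\|_{L^1(\mue)}=\int_{\mca}g_N\,d\mue=W_K-P(\omega_N).
\]
I expect this exact evaluation of the $L^1(\mue)$-norm to be the engine driving the whole equivalence.

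First I would dispatch (b)$\Leftrightarrow$(c). Because $g_N\geq0$ $\mue$-a.e., the displayed equality shows that $g_N\to0$ in $L^1(\mue)$ precisely when $P(\omega_N)\to W_K$; the same computation recasts both conditions as $U^{\nu_N}\to W_K$ in $L^1(\mue)$, a reformulation I would retain for the rest of the argument. Next, for (a)$\Rightarrow$(b), note that the identity gives the a priori bound $P(\omega_N)\le\int U^{\nu_N}\,d\mue=W_K$ for all $N$, so only the matching lower bound is needed. For each $N$ choose a minimizer $x_N\in\mca$ of the lower semicontinuous potential $U^{\nu_N}$, pass to a subsequence with $x_N\to x^\ast\in\mca$, and invoke the principle of descent (joint lower semicontinuity of $(\mu,x)\mapsto U^{\mu}(x)$ under weak convergence) to get $\liminf_N U^{\nu_N}(x_N)\geq U^{\mue}(x^\ast)=W_K$; combined with the upper bound this forces $P(\omega_N)\to W_K$.

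The crux, and the step I expect to be the main obstacle, is (b)$\Rightarrow$(a). I would argue by subsequences: since $\mca$ is compact, every subsequence of $\{\nu_N\}$ has a weakly convergent sub-subsequence $\nu_{N_k}\to\nu\in\mcm(\mca)$, and it suffices to show each such limit equals $\mue$. From (b)$\Leftrightarrow$(c) we have $U^{\nu_{N_k}}\to W_K$ in $L^1(\mue)$, hence along a further subsequence $U^{\nu_{N_k}}\to W_K$ $\mue$-a.e.; the principle of descent then gives $U^{\nu}(x)\le\liminf_k U^{\nu_{N_k}}(x)=W_K$ for $\mue$-a.e.\ $x$. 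Passing the mutual-energy identity to the limit gives $\int_{\mca}U^{\nu}\,d\mue=\int_{\mca}U^e\,d\nu=W_K$, so $\int_{\mca}(W_K-U^{\nu})\,d\mue=0$ with nonnegative integrand, forcing $U^{\nu}=W_K=U^{\mue}$ $\mue$-a.e.\ on $\mca$.

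It then remains to deduce $\nu=\mue$ from the equality of potentials $\mue$-almost everywhere, and this is where conditions (A2) and (A3) must be converted into a uniqueness-of-potentials statement. The clean route is the convexity expansion
\[
I[(1-t)\mue+t\nu]=I[\mue]+2t\big(I[\mue,\nu]-I[\mue]\big)+t^2\,I[\nu-\mue],
\]
whose linear term vanishes because $I[\mue,\nu]=\int U^e\,d\nu=W_K=I[\mue]$, leaving $I[(1-t)\mue+t\nu]=I[\mue]+t^2\big(I[\nu]-W_K\big)$; if one can show $I[\nu]=W_K$, then $\nu$ is itself an energy minimizer and (A2) gives $\nu=\mue$. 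The genuine difficulty is controlling $U^{\nu}$ off $\supp(\mue)$ in order to bound $I[\nu]=\int U^{\nu}\,d\nu$ from above: the established inequality $U^{\nu}\le W_K$ holds only $\mue$-a.e., and I would attempt to upgrade it to hold on all of $\supp(\nu)$ via a domination or maximum principle for $K$, thereby yielding $I[\nu]\le W_K$. This last upgrade is the point at which structure beyond (A1)--(A4), such as positive definiteness of $K$ (as holds for the Riesz kernels in the motivating examples), is likely required. Once every weak limit is identified as $\mue$, a standard subsequence argument shows the full sequence converges weakly to $\mue$, establishing (a).
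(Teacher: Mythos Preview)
Your approach matches the paper's almost exactly: the mutual-energy identity $\int U^{\nu_N}\,d\mue=W_K$, the immediate equivalence (b)$\Leftrightarrow$(c) via $\|g_N\|_{L^1(\mue)}=W_K-P(\omega_N)$, the descent argument for (a)$\Rightarrow$(b), and the subsequence-plus-a.e.-convergence strategy for (b)$\Rightarrow$(a) are all precisely what the paper does. The one place where your argument stalls is illusory.

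You correctly arrive at $U^{\nu}\le W_K$ $\mue$-almost everywhere and then declare that upgrading this to all of $\supp(\nu)$ is ``the genuine difficulty,'' likely requiring a maximum principle or positive definiteness beyond (A1)--(A4). In fact the upgrade follows from the hypotheses already in force. By (A3), $\supp(\mue)=\mca$, so every $\mue$-full set is dense in $\mca$ (any open set meeting $\mca$ has positive $\mue$-measure) and automatically contains every isolated point of $\mca$ (since such a point has positive $\mue$-mass). Because $U^{\nu}$ is lower semicontinuous, for any non-isolated $x\in\mca$ choose $y_n\to x$ lying in the $\mue$-full set where $U^{\nu}\le W_K$; then $U^{\nu}(x)\le\liminf_n U^{\nu}(y_n)\le W_K$. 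Hence $U^{\nu}\le W_K$ on all of $\mca\supseteq\supp(\nu)$, which gives $I[\nu]=\int U^{\nu}\,d\nu\le W_K=I[\mue]$, so $\nu$ is an equilibrium measure and (A2) forces $\nu=\mue$. This is exactly the paper's closing step. No domination or maximum principle is needed, and your convexity expansion is an unnecessary detour: once $I[\nu]\le W_K$, minimality of $\mue$ already yields $I[\nu]=W_K$.
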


\noindent\textit{Remark.}  In \cite{BB}, Borodachov and Bosuwan showed that if $K(x,y)=|x-y|^{-d}$ and $\mca$ is a $d$-dimensional manifold, then any sequence of polarization optimal configurations is asymptotically equidistributed on $\mca$ as $\nri$.  This is distinct from our results because the kernel does not satisfy (A1). 

\begin{proof}
Assume that (a) is true.  For every $n\geq2$, define
\[
U_n(x):=\int K(x,y)d\nu_n(y)=\frac{1}{n}\sum_{y\in\omega_n}K(x,y).
\]
It is clear (by Fubini's Theorem) that
\begin{align}\label{ming}
P(\omega_N)=\min_{x\in\mca}U_n(x)\leq\int U_n(x)d\mue(x)=W_K.
\end{align}
 Let $\{f_{\delta}\}_{\delta>0}$ be a collection of non-negative continuous functions on $[0,\diam(\mca)]$ converging pointwise to $f$ from below as $\delta\rightarrow0^+$.  Let $x_n$ be a point in $\mca$ where $U_n$ attains its minimum.  By passing to a subsequence if necessary, we may assume that $x_n$ converges to some $x_{\infty}$ and $U_n(x_n)$ converges to $\liminf U_m(x_m)$ as $\nri$.
 
 Let $\gamma>0$ be fixed.  Since $f_{\delta}$ is uniformly continuous, when $n$ is sufficiently large we have
\begin{align}
\nonumber U_n(x_n)&=\int K(x_n,y)d\nu_n(y)\geq\int f_{\delta}(|x_n-y|)d\nu_n(y)\geq\int f_{\delta}(|x_{\infty}-y|)d\nu_n(y)-\gamma\\
\label{glow}&\rightarrow\int f_{\delta}(|x_{\infty}-y|)d\mue(y)-\gamma,
\end{align}
as $\nri$.  Taking the supremum over all $\delta>0$ shows
\[
\liminf_{\nri}U_n(x_n)\geq\int K(x_{\infty},y)d\mue(y)-\gamma= W_K-\gamma,
\]
where we used assumption (A4) in this last equality.  Since $\gamma>0$ was arbitrary, this proves part (b).

Now let us assume (b) is true.  We know from (\ref{ming}) that
\begin{align}\label{rem}
\int U_n(x)d\mue(x)=W_K.
\end{align}
However, our assumption (b) implies $\min U_n(x)\rightarrow W_K$ as $\nri$.  Therefore, the functions $\{U_n\}_{n\geq2}$ are such that the minima converge to the average, which is $n$-independent.  We then calculate
\begin{align*}
\int_{\mca}\left|U_n(x)-\min_{z\in\mca}U_n(z)\right|d\mue(x)=\int_{\mca}\left(U_n(x)-\min_{z\in\mca}U_n(z)\right)d\mue(x)\rightarrow0,\qquad\nri,
\end{align*}
which proves (c).

Now, let us assume that (c) is true.  By appealing to (\ref{rem}), we can write
\[
W_K-P(\omega_N)=\int\left(U_n(x)-\min_{z\in\mca}U_n(z)\right)d\mue(x)\rightarrow0,
\]
as $\nri$, which proves (b).

Finally, assume (b) is true and let $\mcn\subseteq\bbN$ be a subsequence through which $\nu_n$ converges weakly to a limit $\nu_{\infty}$ as $\nri$ through $\mcn$.  We have already seen that (b) implies (c), so $U_n-W_K$ converges to $0$ in probability as $\nri$ through $\mcn$.  We may therefore take a further subsequence $\mcn_1\subseteq\mcn$ so that $U_n$ converges to $W_K$ $\mue$-almost everywhere as $\nri$ through $\mcn_1$ (see \cite[page 169]{Taylor}).  If we use the functions $\{f_{\delta}\}_{\delta>0}$ defined above, then we calculate for $\mue$-almost every $x$:
\begin{align*}
W_K&=\lim_{{\nri}\atop{n\in\mcn_1}}U_n(x)=\lim_{{\nri}\atop{n\in\mcn_1}}\int K(x,y)d\nu_n(y)\geq\limsup_{{\nri}\atop{n\in\mcn_1}}\int f_{\delta}(|x-y|)d\nu_n(y)\\
&=\int f_{\delta}(|x-y|)d\nu_{\infty}(y).
\end{align*}
Taking the supremum over all $\delta>0$ shows
\begin{align}\label{luck}
U^{\nu_{\infty}}(x)\leq W_K
\end{align}
$\mue$-almost everywhere, in particular at all isolated points of $\mca$ (by (A3)).  Finally, we note that the potential on the left-hand side of (\ref{luck}) is lower-semicontinuous as a function of $x$.  Therefore (\ref{luck}) holds for all $x\in\mca$ that are not isolated points of $\mca$, and hence for all $x\in\mca$.  From this, it follows that $\nu_{\infty}$ has the same $K$-energy as $\mue$, and the uniqueness of the $K$-equilibrium measure implies that $\nu_{\infty}$ must be $\mue$, which proves (a).
\end{proof}

\noindent\textit{Remark.}  Notice that the implication (b)$\Rightarrow$(c) in Theorem \ref{equi} does not make use of assumption (A3).

\begin{corollary}
Assume the hypotheses of Theorem \ref{equi} on $\mca$ and $K$.
\begin{enumerate}
\item[i)]  $\lim_{\bnri}\mcp(\mca,N)=W_K$
\item[ii)]  For any sequence $\{\omega_N^*\}_{N\geq2}$ of polarization optimal configurations having corresponding counting measure $\{\nu_N^*\}_{N\geq2}$, it holds that $\nu_N^*$ converges weakly to $\mue$ as $\bnri$
\item[iii)]  $\mue$ is the unique polarization extremal measure.
\end{enumerate}
\end{corollary}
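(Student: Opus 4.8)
My plan is to reduce all three parts to Theorem~\ref{equi}, Ohtsuka's identity~(\ref{ohtresult}), and a single Fubini computation that I will use throughout. That computation is that for every $\mu\in\mcm(\mca)$ one has
\[
\int_{\mca}U^{\mu}(x)\,d\mue(x)=\int_{\mca}\int_{\mca}K(x,y)\,d\mu(y)\,d\mue(x)=\int_{\mca}U^e(y)\,d\mu(y)=W_K,
\]
where the interchange is justified because $K\geq0$ and the last equality uses (A4). Since $U^{\mu}$ is lower semicontinuous on the compact set $\mca$ it attains its minimum, and the display forces $\min_{x\in\mca}U^{\mu}(x)\leq W_K$ for every $\mu$, with equality when $\mu=\mue$ by (A4).

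For part (i) I will substitute this into (\ref{ohtresult}): the supremum appearing there is at most $W_K$ and is attained at $\mue$, so it equals $W_K$ and therefore $\lim_{\bnri}\mcp(\mca,N)=W_K$. (As a check, the bound $\mcp(\mca,N)\leq W_K$ already follows for every $N$ from~(\ref{ming}), while the matching lower bound can be obtained by applying (a)$\Rightarrow$(b) of Theorem~\ref{equi} to any configurations whose counting measures converge weakly to $\mue$.) Part (ii) is then immediate: if $\omega_N^*$ is polarization optimal then $P(\omega_N^*)=\mcp(\mca,N)$, so part (i) gives $\lim_{\bnri}P(\omega_N^*)=W_K$, which is exactly hypothesis (b) of Theorem~\ref{equi} for the sequence $\{\omega_N^*\}$; the implication (b)$\Rightarrow$(a) then yields weak convergence of $\nu_N^*$ to $\mue$.

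The substance of the corollary is part (iii), and this is the only step where I expect real work. By part (i) a polarization extremal measure is exactly a $\mu\in\mcm(\mca)$ with $\min_{x}U^{\mu}(x)=W_K$, i.e.\ with $U^{\mu}\geq W_K$ everywhere on $\mca$; simultaneously the Fubini computation gives $\int_{\mca}(U^{\mu}-W_K)\,d\mue=0$ with nonnegative integrand, so $U^{\mu}=W_K$ holds $\mue$-almost everywhere. The hard part will be upgrading this almost-everywhere equality to equality on all of $\mca$, and here I would mimic the closing argument of the proof of Theorem~\ref{equi} with the inequality reversed: lower semicontinuity of $U^{\mu}$ makes $\{x:U^{\mu}(x)>W_K\}$ relatively open, the previous sentence makes it $\mue$-null, and $\supp(\mue)=\mca$ from (A3) forbids a nonempty relatively open null set, so the set is empty and $U^{\mu}\equiv W_K$. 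Once I have $U^{\mu}\equiv W_K$ I will finish as in Theorem~\ref{equi}: then $I[\mu]=\int U^{\mu}\,d\mu=W_K=I[\mue]$, so $\mu$ is a $K$-equilibrium measure, and assumption (A2) forces $\mu=\mue$, giving the asserted uniqueness.
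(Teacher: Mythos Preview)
Your proposal is correct and follows essentially the same route as the paper: the Fubini identity $\int U^{\mu}\,d\mue=W_K$ together with (\ref{ming}) gives the upper bound in (i), Theorem~\ref{equi}(a)$\Rightarrow$(b) gives the lower bound, (ii) is the immediate consequence of (b)$\Rightarrow$(a), and (iii) proceeds exactly as in the paper by showing $U^{\mu}=W_K$ $\mue$-a.e., upgrading to all of $\mca$ via lower semicontinuity and (A3), and then invoking (A2). The only cosmetic difference is that you lead with Ohtsuka's identity~(\ref{ohtresult}) for (i) and relegate the paper's direct argument to a parenthetical check; your topological argument in (iii) (the superlevel set is open, null, hence empty) is a slightly cleaner packaging of what the paper calls ``as in the proof of Theorem~\ref{equi}.''
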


\begin{proof}
(i)  As in (\ref{ming}), we have $\mcp(\mca,N)\leq W_K$.  Now, if $\{\omega_N\}_{N\geq2}$ is such that $\nu_N$ converges weakly to $\mue$ as $\bnri$, then we have
\[
\mcp(\mca,N)\geq P(\omega_N)\rightarrow W_K,
\]
as $\bnri$ by Theorem \ref{equi}.

(ii)  This is immediate from the equivalence of (a) and (b) in Theorem \ref{equi}.

(iii)  Let $\mu_p$ be a polarization extremal measure and $U_p(x)$ the corresponding potential.  Then by definition,
\[
\min_{x\in\mca}U_p(x)=W_K.
\]
However, $\int U_p(x)d\mue(x)=W_K$, so $U_p(x)=W_K$ $\mue$-almost everywhere.  Since (A3) implies $\supp(\mue)=\mca$ and $U_p(x)$ is lower-semicontinuous, this implies $U_p(x)\leq W_K$ on all of $\mca$ (as in the proof of Theorem \ref{equi}).  Therefore, $\mu_p$ has $K$-energy equal to $W_K$ and hence must be $\mue$ by (A2).
\end{proof}

\section{Examples}\label{exam}

In this section we will explore some examples that highlight the utility and some subtleties of the results of Section \ref{res}.  Throughout this section we will refer to the notion of asymptotic optimality, which we define as in \cite{BB}.  A sequence of configurations $\{\omega_N\}_{N=2}^{\infty}$ (where each $\omega_N\in\mca^N$) is said to be asymptotically optimal for the polarization problem if
\[
\lim_{\bnri}\frac{P(\omega_N)}{\mcp(\mca,N)}=1.
\]
With this terminology, Theorem \ref{equi} can be restated as a collection of statements that are equivalent to the condition of asymptotic optimality of the sequence of point configurations $\{\omega_N\}_{N\geq2}$.

\subsection{Example: Riesz potentials on the solid ball.}  Let us assume $t\geq2$ and $\mca=\{x\in\bbR^t:|x|\leq1\}$ and consider the Riesz kernel $K(x,y)=|x-y|^{-s}$ for some $0<s\leq t-2$.  It was shown in \cite[Section 3]{ES} that the $N$-point configuration consisting of $N$ points at the origin is in fact optimal for the polarization problem on the solid ball with this choice of kernel.  It is obvious that a point mass has infinite energy, so the counting measures for the optimal polarization configurations do not, in this case, converge weakly to the equilibrium measure.  Thus we see that it is not clear how asymptotically optimal polarization configurations behave when the conditions (A1-A4) are not satisfied.  This example shows that the equivalences stated in Theorem \ref{equi} need not hold in general.

\subsection{Example: Random and greedy point configurations.}  Suppose that $\mca$ and $K$ are such that conditions (A1-A4) are satisfied.  For each $N\geq2$, let $\omega_N$ be a collection of $N$ points in $\mca$ chosen at random with distribution $\mue$ and let $\nu_N$ be the probability measure assigning weight $N^{-1}$ to each point in $\omega_N$.  The Strong Law of Large Numbers implies that as $\bnri$, the measures $\{\nu_N\}_{N\geq2}$ almost surely converge weakly to $\mue$.  Theorem \ref{equi} implies that $P(\omega_N)\rightarrow W_K$ as $\bnri$.    Therefore, randomly chosen points from the appropriate distribution are almost surely asymptotically optimal for the polarization problem.

In \cite{LGS}, L\'{o}pez-Garc\'{i}a and Saff studied greedy energy points, which are sequences of $N$-point configurations $\{\omega_N\}_{N\geq2}$ that are optimal for the energy problem subject to the constraint that $\omega_{N-1}\subseteq\omega_{N}$.  More precisely, we define a sequence $\{a_n\}_{n=1}^{\infty}$ by choosing $a_1\in\mca$ arbitrarily, and then for each $n>1$ we choose $a_n\in\mca$ so that
\[
\frac{1}{n-1}\sum_{i=1}^{n-1}K(a_n,a_i)=P((a_i)_{i=1}^{n-1}).
\]
The set $\omega_N$ is then taken to be $(a_i)_{i=1}^N$.  We recall \cite[Theorem 2.1(iii)]{LGS}, which says that under the assumptions (A1-A4), it holds that
\[
\lim_{\nri}\frac{1}{n-1}\sum_{i=1}^{n-1}K(a_n,a_i)=W_K.
\]
In other words, the sequence of configurations $\{\omega_N\}_{N\geq2}$ is asymptotically optimal for the polarization problem.  By Theorem \ref{equi}, we conclude that the measures
\[
\frac{1}{N}\sum_{i=1}^N\delta_{a_i}
\]
converge weakly to $\mue$, wihch is the same conclusion as \cite[Theorem 2.1(ii)]{LGS}.


\subsection{Example: Logarithmic potentials on curves in the plane.}  Consider the case when $\mca$ is a union of $M\geq1$ disjoint and mutually exterior Jordan curves in $\bbR^2$ and $K(x,y)=-\log(c|x-y|)$, where $c>0$ is a constant chosen to ensure that $K(x,y)>0$ when $x,y\in\mca$.  In this case, it is easily seen that condition (A1) is satisfied and \cite[Theorem I.1.3]{SaffTot} assures us that (A2) is satisfied.  By \cite[Theorem IV.1.3]{SaffTot} and an application of Mergelyan's Theorem (see \cite[Theorem 20.5]{Rudin}), one can check that condition (A3) is satisfied as well.

The only condition that remains to verify before we can apply our results is (A4).  There are several criteria that imply continuity of the logarithmic equilibrium potential.  For example, \cite[Theorem I.5.1]{SaffTot} tells us that if $z_0\in\mca$ and we define (for some $\lambda\in(0,1)$)
\[
A_n(z_0):=\left\{z:z\in\mca,\quad\lambda^{n+1}\leq|z-z_0|<\lambda^n\right\},
\]
then
\[
\sum_{n=1}^{\infty}\frac{-n}{\log(\mbox{cap}(A_n(z_0)))}=\infty
\]
implies continuity of the logarithmic equilibrium potential at $z_0$.  The criterion that we will use is \cite[Theorem I.4.8ii]{SaffTot}, which applies to every point of $\mca$ because every point of $\mca$ is on the boundary of two components of $\bbR^2\setminus\mca$, one of which is bounded and one of which is unbounded.  Applying this result shows condition (A4) is satisfied, and hence Theorem \ref{equi} applies in this setting.




\vspace{3mm}

\noindent\textbf{Acknowledgements.}  It is a pleasure to thank Tim Michaels, Yujian Su, and Oleksandr Vlasiuk for much useful discussion about the contents of this paper.


\vspace{7mm}














\begin{thebibliography}{99}







\bibitem{ABE} G. Ambrus, K. Ball, and T. Erd\'{e}lyi, {\em Chebyshev constants on the unit ball}, Bull. London Math. Soc. 45(2) (2013), 236--248.

\bibitem{BB} S. Borodachov and N. Bosuwan, {\em Asymptotics of discrete Riesz $d$-polarization on subsets of $d$-dimensional manifolds}, Potential Anal. 41 (2014), no. 1, 35--49.

\bibitem{MEBook} S. Borodachov, D. Hardin, and E. B. Saff, {\em Minimal Discrete Energy on the Sphere and Other Manifolds}, Springer, to appear.


\bibitem{Chung} K. L. Chung, {\em A Course in Probability Theory}, Academic Press, 2001.

\bibitem{ES} T. Erdelyi and E. B. Saff, {\em Riesz polarization inequalities in higher dimensions}, J. Approx. Theory 171 (2013), 128--147.

\bibitem{FR} B. Farkas and S. G. R\'{e}v\'{e}sz, {\em Potential theoretic approach to Rendevous numbers}, Monatsh. Math. 148 (2006), no. 4, 309--331.

\bibitem{Notices} D. Hardin and E. Saff, {\em Discretizing manifolds via minimum energy points},  Notices Amer. Math. Soc. 51 (2004), no. 10, 1186--1194.

\bibitem{HKS} D. Hardin, A. Kendall, and E. Saff, {\em Polarization optimality of equally spaced points on the circle for discrete potentials},  Discrete Comput. Geom. 50 (2013), no. 1, 236--243.



\bibitem{LGS} A. L\'{o}pez-Garc\'{i}a and E. B. Saff, {\em Asymptotics of greedy energy points},  Math. Comp. 79 (2010), 2287--2316.

\bibitem{MZ} J. Mal\'{y} and W. Ziemer, {\em Fine Regularity of Solutions of Elliptic Partial Differential Equations}, American Mathematical Society, 1997.

\bibitem{Ohtsuka} M. Ohtsuka, {\em On various definitions of capacity and related notions}, Nagoya Math. J. 30 (1967) 121--127.

\bibitem{Rudin} W. Rudin, {\em Real and Complex Analysis}, Third Edition, McGraw-Hill, Madison, WI, 1987.

\bibitem{SaffTot} E. B. Saff and V. Totik, {\em Logarithmic Potentials with External Fields}, Springer, 1997.

\bibitem{Taylor} J. Taylor, {\em An Introduction to Measure and Probability}, Springer, 1997.





\end{thebibliography}
\end{document}